\newcommand{\IR}{{\mathbb{R}}}
\newcommand{\IN}{{\mathbb{N}}}
\newcommand{\posR}{\mathbb{R}^+}
\newcommand{\nnegR}{\mathbb{R}^+_0}
\newcommand{\sol}{u \in C^1( V \times \nnegR)}
\newcommand{\eChar}{\begin{enumerate}[(i)]}
\newcommand{\eCharR}{\begin{enumerate}[(a)]}
\newcommand{\eBr}{\begin{enumerate}[(1)]}
\newcommand{\zi}{z_{i'}}%{\widetilde{z_i}}%{Z_i}%
\newcommand{\LL}{\mathcal L}
\newcommand{\II}{\posR}
\newcommand{\Abstract}
{
We show a connection between the $CDE'$ inequality introduced in \cite{Horn2014} and the $CD\psi$ inequality established in \cite{Münch2014}. In particular, we introduce a $CD_\psi^\varphi$ inequality as a slight generalization of $CD\psi$ which turns out to be equivalent to $CDE'$ with appropriate choices of $\varphi$ and $\psi$.
We use this to prove that the $CDE'$ inequality implies the classical $CD$ inequality on graphs, and that the $CDE'$ inequality with curvature bound zero holds on Ricci-flat graphs.
}
\title
{
Remarks on curvature dimension conditions on graphs
}
\author{Florentin Münch}
\date{\today}
\theoremstyle{plain}
\newtheorem{lemma}{Lemma}[section]
\newtheorem{theorem}[lemma]{Theorem}
\newtheorem{proposition}[lemma]{Proposition}
\newtheorem{corollary}[lemma]{Corollary}
\theoremstyle{definition}
\newtheorem{example}[lemma]{Example}
\newtheorem{defn}[lemma]{Definition}
\numberwithin{equation}{section}
\numberwithin{subsection}{section}
\numberwithin{theorem}{section}
\numberwithin{lemma}{section}
\numberwithin{proposition}{section}
\numberwithin{example}{section}
\numberwithin{no}{section}
\numberwithin{rem}{section}
\numberwithin{defn}{section}
\numberwithin{corollary}{section}
\begin{document}

\maketitle
\begin{abstract}
  \Abstract
\end{abstract}
%\tableofcontents

%\section*{Preface}

%\markboth{PREFACE}{PREFACE} 
%I want to thank my family, my advisors Prof. Daniel Lenz and Dr. Matthias Keller and all other people who supported me while writing my Master article.

%\part{Li-Yau-Harnack inequalities on semigroups}

\pagestyle{plain}

%\addcontentsline{toc}{section}{Abstract}

%--------------------------------------------------------------------------------------------------------------------------------------------------------------
%--------------------------------------------------------------------------------------------------------------------------------------------------------------
\section{Introduction}

There is an immense interest in the heat equation on graphs. In
this context, curvature-dimension conditions have attracted
particular attention. In particular, recent works \cite{Bauer2013,Horn2014,Münch2014} have
introduced a variety of such conditions. In this note, we will extend ideas of \cite{Münch2014} to show
a connection between them (Proposition~\ref{PCC} and Section~\ref{SSCC}). Moreover, we
will prove that Ricci-flat graphs satisfy the $CDE'$ condition
(Section~\ref{SRF}). 

Throughout the note,  we will use  notation and definitions
introduced in \cite{Bakry1985,Bauer2013, Chung1996,Horn2014,Münch2014} which can
 also be found in the appendix.

{\bf Acknowledgements}

I wish to thank Matthias Keller and Daniel Lenz for their support and for sharing ideas in many fruitful discussions.
%, who have been the supervisors of my master thesis which is presented here in this article, for many useful discussions and for providing a very enjoyable and constructive atmosphere. Moreover, I %wish to acknowledge Matthias Keller for proposing the topic of my master thesis. 
%--------------------------------------------------------------------------------------------------------------------------------------------------------------
%--------------------------------------------------------------------------------------------------------------------------------------------------------------
\section{The connection between the $CDE'$ and the $CD\psi$ inequality}

First, we consider the connection between 
$\Gamma$ (cf. Definition~\ref{DG}) and $\Gamma^\psi$ (cf. Definition~\ref{DGpsi}),
 and between
$\widetilde{\Gamma}_2$ (cf. Definition~\ref{DGT}) and $\Gamma_2^\psi$ (cf. Definition~\ref{DefG2}).

%\begin{defn}
%The operator $ \widetilde{\Gamma}_2 : C^+(V) \to C(V)$ is defined by
%\[
%\widetilde{\Gamma}_2 (f) := \Gamma_2(f) - \Gamma\left(f, \frac {\Gamma(f)} f \right).
%\]
%\end{defn}

\begin{lemma} \label{LTG}
For all $f \in C^+(V)$,
\begin{eqnarray}
f \Gamma^{\sqrt{\cdot}} (f) &=& \Gamma(\sqrt f), 						\label{BauerG}			\\	
f \Gamma_2^{\sqrt{\cdot}} (f) &=& \widetilde{\Gamma}_2{(\sqrt f)}.  \label{BauerG2}
\end{eqnarray}
\end{lemma}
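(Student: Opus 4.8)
The plan is to verify both identities by a direct computation from the definitions recalled in the appendix, establishing (\ref{BauerG}) first and then using it to reduce (\ref{BauerG2}). For (\ref{BauerG}), fix a vertex $x$ and unfold Definition~\ref{DG} and Definition~\ref{DGpsi} at $x$, writing each side as a sum over the neighbours $y$ of $x$. Since $\Gamma(\sqrt f)(x) = \tfrac12\sum_{y} w_{xy}\bigl(\sqrt{f(y)} - \sqrt{f(x)}\bigr)^2$, the claim reduces, edge by edge, to an elementary relation between $\bigl(\sqrt{f(y)}-\sqrt{f(x)}\bigr)^2$ and the integrand defining $\Gamma^{\sqrt{\cdot}}$, specialised to $\psi(t) = \sqrt t$, $\psi'(t) = \tfrac{1}{2\sqrt t}$. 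The only point to watch is that the prefactor $f(x)$ on the left absorbs exactly the $\psi'(f(x))$-weights appearing in $\Gamma^{\sqrt{\cdot}}(f)(x)$.

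For (\ref{BauerG2}), write $g := \sqrt f$. Both $\widetilde{\Gamma}_2$ and $\Gamma_2^{\sqrt{\cdot}}$ are assembled from $\Delta$ together with $\Gamma$, respectively $\Gamma^{\sqrt{\cdot}}$, so (\ref{BauerG}) can be fed into the computation. Expanding $\widetilde{\Gamma}_2(g)$ via Definition~\ref{DGT} as $\Gamma_2(g)$ corrected by a term built from $\Gamma\bigl(g,\Gamma(g)/g\bigr)$, and $\Gamma_2(g) = \tfrac12\Delta\Gamma(g) - \Gamma(g,\Delta g)$, identity (\ref{BauerG}) gives $\Gamma(g) = f\,\Gamma^{\sqrt{\cdot}}(f)$ and $\Gamma(g)/g = g\,\Gamma^{\sqrt{\cdot}}(f)$, hence $\Delta\Gamma(g) = \Delta\bigl(f\,\Gamma^{\sqrt{\cdot}}(f)\bigr)$. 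The remaining pieces $\Gamma(g,\Delta g)$ and $\Gamma\bigl(g, g\,\Gamma^{\sqrt{\cdot}}(f)\bigr)$ I would expand per edge using the polarisation of $\Gamma$ and the expression for $\Delta\sqrt f$. On the other side, $f\,\Gamma_2^{\sqrt{\cdot}}(f)$ is expanded directly from Definition~\ref{DefG2}; after specialising $\psi = \sqrt{\cdot}$ the two expansions should agree term by term, the $\psi$-modification built into $\Gamma_2^{\sqrt{\cdot}}$ being precisely what reproduces the $\Gamma\bigl(g,\Gamma(g)/g\bigr)$-correction of $\widetilde{\Gamma}_2$.

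The main obstacle is the bookkeeping in (\ref{BauerG2}): one manipulates several double sums over edges together with the non-vanishing chain-rule remainder of $\Delta\sqrt f$, and must organise them so that the correspondence with the $\psi$-terms becomes visible. I expect the cleanest route is to carry out the computation for a general concave $\psi$, reducing every term to an expression in the edge increments $\psi(f(y)) - \psi(f(x))$ and their $\Delta$-images, so that (\ref{BauerG}) disposes of all first-order contributions and only the genuinely second-order ones must be matched by hand, and to set $\psi = \sqrt{\cdot}$ only at the very end.
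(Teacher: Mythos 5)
Your treatment of (\ref{BauerG}) is correct and is in substance the paper's own argument: unfolding $\Gamma^{\sqrt{\cdot}} = \Delta^{\overline{\sqrt{\cdot}}}$ with $\overline{\sqrt{\cdot}}(t) = \tfrac12(t-1) - (\sqrt{t}-1)$ and multiplying by $f(x)$ gives $f(x)\,\overline{\sqrt{\cdot}}\bigl(f(y)/f(x)\bigr) = \tfrac12\bigl(\sqrt{f(y)}-\sqrt{f(x)}\bigr)^2$ edge by edge, which is exactly the one-line computation $2f\bigl[\tfrac{\Delta f}{2f} - \Delta\sqrt{f/f(x)}\bigr] = \Delta f - 2\sqrt f\,\Delta\sqrt f = 2\Gamma(\sqrt f)$ in the paper. (Minor quibble: there are no weights $w_{xy}$ or ``$\psi'(f(x))$-weights'' in this setting; what the prefactor $f(x)$ absorbs is the evaluation of $\overline{\psi}$ at the ratio $f(y)/f(x)$.)

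For (\ref{BauerG2}) there is a genuine gap: what you give is a plan, not a proof. The entire content of the identity is the claim that the expansion of $f\Gamma_2^{\sqrt{\cdot}}(f)$ from Definition~\ref{DefG2} --- in particular the term $f\,\Omega^{\sqrt{\cdot}}f$, where $\Omega^{\sqrt{\cdot}}f(v) = \Delta\bigl[\tfrac12\sqrt{f/f(v)}\,\bigl(\tfrac{\Delta f}{f} - \tfrac{\Delta f(v)}{f(v)}\bigr)\bigr](v)$ --- matches the correction term $-\Gamma\bigl(g,\Gamma(g)/g\bigr)$ in $\widetilde{\Gamma}_2(g)$ after the first-order pieces cancel; you assert that the two expansions ``should agree term by term'' but never exhibit the matching, and that is precisely the nontrivial bookkeeping. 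It is worth knowing that the paper avoids this computation entirely: it quotes from \cite{Bauer2013} the identity $2\widetilde{\Gamma}_2(\sqrt u) = \LL(\Gamma\sqrt u)$ for positive solutions $u$ of the heat equation and from \cite{M�nch2014} the identity $2u\Gamma_2^\psi(u) = \LL(u\Gamma^\psi(u))$, sets $u = P_t f$, and concludes
\begin{equation*}
2\widetilde{\Gamma}_2(\sqrt f) = \bigl[\LL(\Gamma\sqrt u)\bigr]_{t=0} = \bigl[\LL(u\,\Gamma^{\sqrt{\cdot}}(u))\bigr]_{t=0} = 2f\,\Gamma_2^{\sqrt{\cdot}}(f),
\end{equation*}
where the middle equality is exactly your (\ref{BauerG}) applied to $u$. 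Your direct route would yield a self-contained proof if carried out, which the paper's citation-based argument is not; but as written you have only reduced the lemma to an unverified computation. Either complete the term-by-term identification (most cleanly by first proving $2u\Gamma_2^\psi(u) = \LL(u\Gamma^\psi(u))$ yourself and then following the semigroup argument), or perform the expansion of $\Omega^{\sqrt{\cdot}}$ explicitly.
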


\begin{proof}
Let $f \in C^+(V)$ and $x \in V$. Then for the proof of (\ref{BauerG}),
\[
2\left[f \Gamma^{\sqrt{\cdot}} (f)\right](x) = 2f(x) \left[\frac {\Delta f}{2f} - \Delta \sqrt{\frac f {f(x)}}\right](x) = \left[\Delta f - 2\sqrt f \Delta \sqrt f \right](x) = 2 \Gamma(\sqrt f)(x).
\]

Next, we prove (\ref{BauerG2}).
In \cite[(4.7)]{Bauer2013}, it is shown that for all positive solutions $\sol$ to the heat equation, one has
\[
2 \widetilde{\Gamma}_2{\sqrt u} = \mathcal L (\Gamma \sqrt u).
\]
Now, we set $u:=P_t f$ and we apply the above proven identity (\ref{BauerG}) and the identity $2u\Gamma_2^\psi (u) = \mathcal L(u\Gamma^\psi (u))$ (cf. \cite[Subsection 3.2]{Münch2014}) to obtain
\begin{eqnarray*}
2 \widetilde{\Gamma}_2{(\sqrt f)} = \left[\mathcal L (\Gamma (\sqrt u))\right]_{t=0} = \left[\mathcal L (u \Gamma^{\sqrt{\cdot}} (u))\right]_{t=0} = 2 f \Gamma_2^{\sqrt{\cdot}} (f).
\end{eqnarray*}
This finishes the proof.
\end{proof}

The following definition extends the $CD\psi$ inequality to compare it to the $CDE'$ inequality.

\begin{defn}[$CD^\varphi_\psi$ condition]
Let $d \in (0,\infty]$ and $K\in \IR$. Let $\varphi,\psi \in C^1(\IR^+)$ be concave functions. A graph $G=(V,E)$ satisfies the $CD^\varphi_\psi (d,K)$ condition, if for all $f \in C^+(V)$,
\[
\Gamma_2^\psi(f) \geq \frac 1 d \left(\Delta^\varphi f \right)^2 + K \Gamma^\psi(f).
\]
\end{defn}

Indeed, this definition is an extension of $CD\psi$ which is equivalent to $CD_\psi^\psi$.

\begin{proposition} \label{PCC}
Let $G=(V,E)$ be a graph, let $d \in (0,\infty]$ and $K\in \IR$. Then, the following statements are equivalent.
\eChar
	\item   \label{1}
	  $G$ satisfies the $CDE'(d,K)$ inequality. 
	\item   \label{2}
	  $G$ satisfies the $CD^{\log }_{\sqrt{\cdot}}(4d,K)$ inequality. 
\end{enumerate}
\end{proposition}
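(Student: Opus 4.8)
The plan is to use Lemma~\ref{LTG} as a dictionary that turns $\Gamma^{\sqrt{\cdot}}$ and $\Gamma_2^{\sqrt{\cdot}}$ into $\Gamma$ and $\widetilde{\Gamma}_2$, and then to match the two inequalities by the change of variables $g = f^2$. Recall that $CDE'(d,K)$ asserts $\widetilde{\Gamma}_2(f) \geq \tfrac1d f^2 (\Delta \log f)^2 + K\Gamma(f)$ for all $f \in C^+(V)$, and that the operator $\Delta^{\varphi}$ is $\Delta^{\varphi} g = \Delta(\varphi\circ g)$, so $\Delta^{\log} g = \Delta \log g$ and $CD^{\log}_{\sqrt{\cdot}}(4d,K)$ asserts $\Gamma_2^{\sqrt{\cdot}}(g) \geq \tfrac{1}{4d}(\Delta \log g)^2 + K\Gamma^{\sqrt{\cdot}}(g)$ for all $g \in C^+(V)$. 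Since $f\mapsto f^2$ is a bijection of $C^+(V)$ and multiplication (resp.\ division) by a strictly positive function is reversible at every vertex, I expect to get the equivalence directly, by a single chain of equivalent reformulations rather than two separate implications.

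Concretely, for the direction (\ref{2})~$\Rightarrow$~(\ref{1}) I would fix $f \in C^+(V)$, apply the assumed $CD^{\log}_{\sqrt{\cdot}}(4d,K)$ inequality to the test function $g := f^2 \in C^+(V)$, multiply it pointwise by $g > 0$, and invoke both identities of Lemma~\ref{LTG}:
\[
\widetilde{\Gamma}_2(\sqrt{g}) \;=\; g\,\Gamma_2^{\sqrt{\cdot}}(g) \;\geq\; \frac{g}{4d}(\Delta \log g)^2 + K\, g\,\Gamma^{\sqrt{\cdot}}(g) \;=\; \frac{g}{4d}(\Delta \log g)^2 + K\,\Gamma(\sqrt{g}).
\]
Now $\sqrt{g} = f$ and $\log g = 2\log f$, hence $\Delta \log g = 2\,\Delta \log f$, so the middle term equals $\tfrac{f^2}{4d}\,(2\Delta\log f)^2 = \tfrac{f^2}{d}(\Delta \log f)^2$; the displayed inequality is then exactly the $CDE'(d,K)$ inequality for $f$, and $f$ was arbitrary. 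For (\ref{1})~$\Rightarrow$~(\ref{2}) I would run the same identities backwards: given $g \in C^+(V)$, put $f := \sqrt{g}$, apply $CDE'(d,K)$ to $f$, substitute $\Delta \log f = \tfrac12 \Delta \log g$ and $f^2 = g$, and divide pointwise by $g > 0$ while using Lemma~\ref{LTG} in the form $\Gamma_2^{\sqrt{\cdot}}(g) = \tfrac1g \widetilde{\Gamma}_2(\sqrt g)$ and $\Gamma^{\sqrt{\cdot}}(g) = \tfrac1g \Gamma(\sqrt g)$, which reproduces the $CD^{\log}_{\sqrt{\cdot}}(4d,K)$ inequality for $g$.

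The calculation itself is routine once the dictionary is set up; the only place that needs genuine care is the bookkeeping of the constant. The factor $4$ relating $4d$ to $d$ comes solely from squaring the exact graph identity $\Delta \log(f^2) = 2\,\Delta \log f$, and this is precisely what fixes the dimension parameter on the two sides. Accordingly, the main (and essentially the only) obstacle is making sure the normalizations in the definitions of $CDE'$, of $\Gamma^{\sqrt{\cdot}},\Gamma_2^{\sqrt{\cdot}}$, and of $\Delta^{\varphi}$ are pinned down correctly so that no stray numerical factor is lost.
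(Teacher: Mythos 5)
Your proof is correct and follows essentially the same route as the paper: substitute $f \leftrightarrow f^2$, translate via Lemma~\ref{LTG}, and track the factor $4$ coming from $\Delta\log(f^2)=2\,\Delta\log f$ (the paper phrases this as a single chain of equivalences rather than two separate implications, which is purely cosmetic). One small caution: your parenthetical claim that $\Delta^{\varphi}g=\Delta(\varphi\circ g)$ in general is false --- the definition evaluates $\varphi$ at $g/g(v)$, not at $g$ --- but the identity does hold for $\varphi=\log$ since the normalization contributes only an additive constant, and that is the only case you use.
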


\begin{proof}
By definition, the $CDE'(d,K)$ inequality is equivalent to
\[
\widetilde\Gamma_2(f)		\geq \frac 1 d 	f^2 \left(\Delta \log f\right)^2 + 		K\Gamma(f)                              , \quad f \in C^+(V).
\] 
By replacing $f$ by $\sqrt f$ (all allowed $f \in C(V)$ are strictly positive), this is equivalent to 
\[
\widetilde\Gamma_2(\sqrt{f})			 \geq \frac 1 d 	f \left(\Delta \log \sqrt{f}\right)^2 + K\Gamma(\sqrt f)					, \quad f \in C^+(V).
\]
By applying Lemma~\ref{LTG} and the fact that $\Delta^{\log} = \Delta \circ \log$, this is equivalent to
\[
f\Gamma_2^{\sqrt{\cdot}}(f)			 \geq \frac 1 {4d} f \left(\Delta^{\log} f \right)^2 + f K\Gamma^{\sqrt{\cdot}}(f)    , \quad f \in C^+(V).
\]
By dividing by $f$ (all allowed $f \in C(V)$ are strictly positive), this is equivalent to
\[
\Gamma_2^{\sqrt{\cdot}}(f)			   \geq \frac 1 {4d}   \left(\Delta^{\log} f \right)^2 +   K\Gamma^{\sqrt{\cdot}}(f)	,	\quad  f\in C^+(V).
\] 
By definition, this is equivalent to $CD^{\log }_{\sqrt{\cdot}}(4d,K)$.
This finishes the proof.
\end{proof}

%--------------------------------------------------------------------------------------------------------------------------------------------------------------
\section{The $CDE'$ inequality implies the $CD$ inequality} \label{SSCC}

First, we recall a limit theorem \cite[Theorem 3.18]{Münch2014} by which it is shown that the $CD\psi$ condition implies the $CD$ condition (cf. \cite[Corollary 3.20]{Münch2014}).
\begin{theorem} [Limit of the $\psi$-operators] 
Let $G=(V,E)$ be a finite graph. Then for all $f \in C(V)$, one has the pointwise limits 
\begin{eqnarray}
\lim_{\varepsilon \to 0} \frac 1 {\varepsilon} \Delta^\psi (1+ \varepsilon f)     \; =& \psi'(1) \Delta f 			   &\quad \mbox{for } \psi \in C^1(\posR), \label{ELL}\\
\lim_{\varepsilon \to 0} \frac 1 {\varepsilon^2} \Gamma^\psi (1+ \varepsilon f)   \; =& -\psi''(1) \Gamma(f) 	   	 &\quad \mbox{for } \psi \in C^2(\posR), \label{EL1}\\
\lim_{\varepsilon \to 0} \frac 1 {\varepsilon^2} \Gamma_2^\psi (1+ \varepsilon f) \; =& -\psi''(1) \Gamma_2(f) 		 &\quad \mbox{for } \psi \in C^2(\posR). \label{EL2}
\end{eqnarray}
Since all $f \in C(V)$ are bounded, one obviously has $1+ \varepsilon f> 0$ for small enough $\varepsilon>0$.
\end{theorem}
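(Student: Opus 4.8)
The plan is to compute each of the three limits by Taylor-expanding $\psi$ around the point $1$ and using the pointwise defining formulas of $\Delta^\psi$, $\Gamma^\psi$, $\Gamma_2^\psi$ from the appendix (Definitions~\ref{DGpsi}, \ref{DefG2} and the corresponding one for $\Delta^\psi$). Since $G$ is finite, every $f \in C(V)$ is bounded, so for $\varepsilon$ small enough $1 + \varepsilon f > 0$ on all of $V$ and all expressions are well-defined; the limits are pointwise in $x \in V$, so I may fix $x$ throughout and treat everything as a finite sum over neighbours $y \sim x$.

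First I would treat \eqref{ELL}. Writing out $\Delta^\psi g(x)$ — which has the shape $\sum_{y} w_{xy}\bigl(\psi(g(y)/g(x)) - \psi(1)\bigr)$ up to the normalization built into the definition — and substituting $g = 1 + \varepsilon f$, the argument $g(y)/g(x) = 1 + \varepsilon(f(y) - f(x)) + O(\varepsilon^2)$. A first-order Taylor expansion $\psi(1 + t) = \psi(1) + \psi'(1) t + O(t^2)$ gives $\psi(g(y)/g(x)) - \psi(1) = \psi'(1)\varepsilon(f(y)-f(x)) + O(\varepsilon^2)$; dividing by $\varepsilon$ and letting $\varepsilon \to 0$ collapses the sum to $\psi'(1)\sum_y w_{xy}(f(y)-f(x)) = \psi'(1)\Delta f(x)$. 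For \eqref{EL1} I would do the same with the definition of $\Gamma^\psi$, but now carry the expansion to second order: the first-order terms must be checked to cancel (they do, by the algebraic structure of $\Gamma^\psi$, which is built as a $\psi$-weighted analogue of $\Gamma$ and vanishes on constants together with its first variation), leaving a genuinely quadratic leading term whose coefficient is $\psi''(1)$ — the sign $-\psi''(1)$ comes from how $\Gamma^\psi$ is normalized (concavity of $\psi$ making $\Gamma^\psi \geq 0$). For \eqref{EL2}, since $\Gamma_2^\psi$ is obtained from $\Gamma^\psi$ and $\Delta^\psi$ by the same combination ($2u\Gamma_2^\psi(u) = \mathcal L(u\Gamma^\psi(u))$, or directly the iterated definition), I would either expand it directly to second order or — cleaner — use the identity $2u\Gamma_2^\psi(u) = \mathcal L(u\Gamma^\psi(u))$ with $u = 1 + \varepsilon f$: the factor $u \to 1$, $\mathcal L = \Delta - \partial_t$ acts linearly, and combining \eqref{EL1} with \eqref{ELL} applied to the relevant pieces yields $-\psi''(1)\Gamma_2(f)$ after passing $\varepsilon^{-2}$ through.

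The routine but slightly delicate part is bookkeeping the error terms: one must confirm that the $O(\varepsilon^2)$ remainders in \eqref{ELL} and the $O(\varepsilon^3)$ remainders in \eqref{EL1}, \eqref{EL2} are uniform in $y \sim x$. This is immediate because for fixed $x$ the neighbour set is finite, $f$ is bounded, and $\psi \in C^1$ (resp. $C^2$) gives a Taylor remainder that is $o(1)$ (resp. $o(\varepsilon)$ relative to the main term) uniformly on the compact interval of values taken by the arguments. The main obstacle — really the only thing requiring care — is the \emph{cancellation of first-order terms} in \eqref{EL1} and \eqref{EL2}: one has to verify from the explicit definition of $\Gamma^\psi$ (and $\Gamma_2^\psi$) that the linear-in-$\varepsilon$ contribution is identically zero, so that division by $\varepsilon^2$ is legitimate and produces a finite limit; this is exactly the reflection of the fact that $\Gamma^\psi$ and $\Gamma_2^\psi$ vanish on constant functions and their linearization at a constant vanishes as well.
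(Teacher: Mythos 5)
The paper offers no proof of its own here---it simply defers to the proof of Theorem~3.18 in the cited reference---and your plan (Taylor expansion of $\psi$ about $1$, with the first-order cancellation in \eqref{EL1} coming from $\overline{\psi}'(1)=0$ in the definition $\Gamma^\psi=\Delta^{\overline{\psi}}$) is exactly that standard argument, so it is essentially the same approach and is correct. The one caveat is your ``cleaner'' route to \eqref{EL2} via $2u\Gamma_2^\psi(u)=\mathcal L(u\Gamma^\psi(u))$: that identity holds only for positive solutions of the heat equation, so you would have to take $u=P_t(1+\varepsilon f)$ and control the $\partial_t$ term at $t=0$, which is not just ``combining \eqref{EL1} with \eqref{ELL}''---stick with the direct second-order expansion of $\Omega^\psi$ and the remaining terms in Definition~\ref{DefG2}, which your plan also covers.
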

\begin{proof} For a proof, we refer the reader to the proof of \cite[Theorem 3.18]{Münch2014}.
\end{proof}
By adapting the methods of the proof of \cite[Corollary 3.20]{Münch2014}, we can show that $CD_\psi^\varphi$ implies $CD$ and, especially, we can handle the $CDE'$ condition.

\begin{theorem}
Let $\varphi, \psi \in C^2(\posR)$ be concave with $\psi''(1)\neq 0 \neq \varphi'(1)$ and let $d \in \posR$. Let $G=(V,E)$ be a graph satisfying the $CD_\psi^\varphi(d,K)$ condition.
Then, $G$ also satisfies the $CD\left(\frac{ -\psi''(1)}{\varphi'(1)^2}d,K \right)$ condition.
\end{theorem}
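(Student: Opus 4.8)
The plan is to linearize the $CD_\psi^\varphi(d,K)$ inequality around the constant function $1$, exactly as in the proof of \cite[Corollary 3.20]{M�nch2014}. Fix $f \in C(V)$. For $\varepsilon > 0$ small enough, $1 + \varepsilon f > 0$, so we may apply the $CD_\psi^\varphi(d,K)$ hypothesis to the function $1 + \varepsilon f$, obtaining
\[
\Gamma_2^\psi(1+\varepsilon f) \geq \frac 1 d \left(\Delta^\varphi (1+\varepsilon f)\right)^2 + K \Gamma^\psi(1+\varepsilon f).
\]
Then I would divide both sides by $\varepsilon^2$ and let $\varepsilon \to 0$, invoking the three pointwise limits of the limit theorem: the left side tends to $-\psi''(1)\Gamma_2(f)$ by \eqref{EL2}, the term $\frac 1 d (\Delta^\varphi(1+\varepsilon f))^2 = \frac 1 d \left(\frac 1\varepsilon \Delta^\varphi(1+\varepsilon f)\right)^2 \varepsilon^2$ divided by $\varepsilon^2$ tends to $\frac 1 d (\varphi'(1))^2 (\Delta f)^2$ by \eqref{ELL}, and $\frac 1{\varepsilon^2}\Gamma^\psi(1+\varepsilon f) \to -\psi''(1)\Gamma(f)$ by \eqref{EL1}. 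This yields
\[
-\psi''(1)\,\Gamma_2(f) \geq \frac{(\varphi'(1))^2}{d}\,(\Delta f)^2 + K\,(-\psi''(1))\,\Gamma(f).
\]

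Next I would divide through by $-\psi''(1)$, which is legitimate and preserves the inequality because $\psi$ is concave with $\psi''(1) \neq 0$, hence $-\psi''(1) > 0$. This gives
\[
\Gamma_2(f) \geq \frac{(\varphi'(1))^2}{-\psi''(1)\,d}\,(\Delta f)^2 + K\,\Gamma(f) = \frac 1 {\frac{-\psi''(1)}{(\varphi'(1))^2}d}\,(\Delta f)^2 + K\,\Gamma(f),
\]
which, since $\Delta f = 2\Gamma_1(f,\mathbf 1)$-type normalization matches the definition of $CD$ (the $CD(n,K)$ condition reads $\Gamma_2(f) \geq \frac 1 n (\Delta f)^2 + K\Gamma(f)$), is precisely the statement that $G$ satisfies $CD\!\left(\frac{-\psi''(1)}{(\varphi'(1))^2}d, K\right)$. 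Note $\varphi'(1) \neq 0$ ensures the dimension parameter is finite and positive.

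The main obstacle, or rather the only point needing a little care, is the interchange of limit and inequality: since each of the three expressions converges pointwise at the fixed vertex $x$ (as guaranteed by the limit theorem, whose proof we have taken as given), and a non-strict inequality between two convergent sequences passes to the limit, there is no real difficulty here — the argument is a routine $\varepsilon \to 0$ passage. One should simply remark that the graph may be assumed finite, or else that the relevant sums are locally finite so that the pointwise computations and limits make sense vertex by vertex; the limit theorem as stated is for finite graphs, but the inequality $CD_\psi^\varphi$ and the conclusion $CD$ are both pointwise conditions, so it suffices to apply the finite-graph limit theorem on (or argue directly at) each vertex. I would close by remarking that applying this theorem with $\varphi = \log$ and $\psi = \sqrt{\cdot}$ — for which $\varphi'(1) = 1$ and $\psi''(1) = -\tfrac14$ — recovers, via Proposition~\ref{PCC}, that $CDE'(d,K)$ implies $CD(d,K)$.
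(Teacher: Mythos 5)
Your proposal is correct and follows essentially the same route as the paper: apply the $CD_\psi^\varphi(d,K)$ hypothesis to $1+\varepsilon f$, divide by $\varepsilon^2$, pass to the limit using the three pointwise limits of the limit theorem, and then divide by $-\psi''(1)>0$. The only difference is cosmetic — you are in fact more careful than the paper's own write-up, which ends with a typographical slip stating the conclusion as $CD\bigl(\tfrac{-\psi''(1)}{\varphi'(1)^2}d,0\bigr)$ rather than with the curvature bound $K$.
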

\begin{proof}
Let $f \in C(V)$.  We apply \cite[Theorem 3.18]{Münch2014} in the following both equations and since $G$ satisfies the $CD_\psi^\varphi(d,K)$ condition,
\begin{eqnarray*}
		      -\psi''(1)\Gamma_2(f)
=		      \lim_{\varepsilon \to 0} \frac 1 {\varepsilon^2} \Gamma_2^\psi(1+\varepsilon f) 
&\geq&    \lim_{\varepsilon \to 0}  \frac 1 {\varepsilon^2}\left( \frac 1 d \left[ \Delta^\varphi(1+\varepsilon f)\right]^2  + K  \Gamma^\psi(1+\varepsilon f) \right) \\
&=&			  \frac {\varphi'(1)^2 }{d} (\Delta f)^2 -\psi''(1)K \Gamma(f).
\end{eqnarray*}
Since $\psi$ is concave and $\psi''(1) \neq 0$, one has $-\psi''(1)>0$. Thus, we obtain that $G$ satisfies the $CD\left(\frac{ -\psi''(1)}{\varphi'(1)^2}d,0 \right)$ condition.
\end{proof}

\begin{corollary}
If $G=(V,E)$ satisfies the $CDE'(d,K)$, i.e., the $CD^{\log }_{\sqrt{\cdot}}(4d,K)$, then $G$ also satisfies the $CD(d,K)$ condition since $-4\sqrt{\cdot}''(1) = 1 = \log'(1)$.
\end{corollary}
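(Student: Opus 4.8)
The plan is to derive this corollary as an immediate specialization of the preceding theorem, so the only real work is bookkeeping with the specific functions $\varphi=\log$ and $\psi=\sqrt{\cdot}$. First I would invoke Proposition~\ref{PCC}, which tells us that $CDE'(d,K)$ is literally the same condition as $CD^{\log}_{\sqrt{\cdot}}(4d,K)$; this lets us view the hypothesis through the lens of the general $CD^\varphi_\psi$ framework with $\varphi=\log$, $\psi=\sqrt{\cdot}$, and dimension parameter $4d$ in place of the generic $d$ appearing in the theorem.

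Next I would check that the regularity and nondegeneracy hypotheses of the theorem are met: both $\log$ and $\sqrt{\cdot}$ lie in $C^2(\posR)$ and are concave, and at the point $1$ we compute $\log'(1)=1\neq 0$ and $\sqrt{\cdot}''(1)=-\tfrac14\neq 0$. Then the theorem yields that $G$ satisfies $CD\!\left(\frac{-\psi''(1)}{\varphi'(1)^2}\cdot 4d,\; K\right)$. Substituting $\psi''(1)=-\tfrac14$ and $\varphi'(1)=1$ gives $\frac{-(-1/4)}{1^2}\cdot 4d = \frac{1}{4}\cdot 4d = d$, so the resulting condition is exactly $CD(d,K)$, as claimed. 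The identity $-4\sqrt{\cdot}''(1)=1=\log'(1)$ quoted in the statement is just the compact encoding of this arithmetic.

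There is essentially no obstacle here — the statement is a corollary precisely because all the analytic content (the limit theorem and its consequence for $CD^\varphi_\psi$) has already been done. The one point requiring a moment's care is making sure the factor of $4$ introduced by Proposition~\ref{PCC} (which replaces $d$ by $4d$ in the $CD^\varphi_\psi$ parameter) cancels exactly against the factor $\tfrac14$ coming from $-\psi''(1)=\tfrac14$, so that the dimension parameter in the final $CD$ inequality is $d$ and not $4d$ or $d/4$. Once that cancellation is verified, the proof is complete.
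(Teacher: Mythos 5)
Your proposal is correct and is exactly the argument the paper intends: combine Proposition~\ref{PCC} with the preceding theorem for $\varphi=\log$, $\psi=\sqrt{\cdot}$, and observe that the factor $4$ from the equivalence cancels against $-\psi''(1)=\tfrac14$, yielding dimension $\frac{-\psi''(1)}{\varphi'(1)^2}\cdot 4d = d$. The paper leaves this computation implicit in the phrase ``since $-4\sqrt{\cdot}''(1)=1=\log'(1)$,'' so your write-up simply makes the same bookkeeping explicit.
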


%--------------------------------------------------------------------------------------------------------------------------------------------------------------
\section{The $CDE'$ inequality on Ricci-flat graphs} \label{SRF}

In \cite{Horn2014}, the $CDE'$ inequality is introduced. Examples for graphs satisfying this inequality have not been provided yet.
In this section, we show that the more general $CD_\psi^\varphi$ condition holds on Ricci-flat graphs (cf. \cite{Chung1996}). We will refer to the proof of the $CD\psi$ inequality on Ricci-flat graphs (cf. \cite[Theorem 6.6]{Münch2014}).
Similarly to \cite{Münch2014}, we introduce a constant $C_\psi^\varphi$ describing the relation between the degree of the graph and the dimension parameter in the $CD_\psi^\varphi$ inequality.

\begin{defn}
Let $\varphi,\psi \in C^1(\IR)$. 
Then for all $x,y > 0$, we write
\[
\widetilde{\psi}(x,y):= \left[\psi'(x) + \psi'(y) \right](1-xy) + x [\psi(y) - \psi(1/x)] + y [\psi(x) - \psi(1/y)]
\]
and
\[
C_\psi^\varphi := \inf_{(x,y) \in A_\varphi} \frac{\widetilde{\psi}(x,y)}{(\varphi(x) + \varphi(y) - 2\varphi(1))^2} \in [-\infty, \infty]
\]
with $A_\varphi:=\{(x,y)\in \left(\posR \right)^2 : \varphi(x) + \varphi(y) \neq 2\varphi(1)  \}$. We have $C_\psi^\varphi = \infty$ iff $A_\varphi = \emptyset$.
\end{defn}

\begin{theorem}[$CD_\psi^\varphi$ for Ricci-flat graphs] \label{TRicci} 
Let $D \in \IN$, let $G=(V,E)$ be a $D$-Ricci-flat graph, and let $\psi,\varphi \in C^1(\posR)$ be concave functions, such that $C_\psi^\varphi>0$. Then, $G$ satisfies the $CD_\psi^\varphi (d,0)$ inequality with $d = D / C_\psi^\varphi$.
\end{theorem}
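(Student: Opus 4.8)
The plan is to mimic the proof of the $CD\psi$ inequality on Ricci-flat graphs (\cite[Theorem 6.6]{M�nch2014}), adapting each step to accommodate the two distinct functions $\varphi$ and $\psi$ instead of a single $\psi$. Fix a vertex $x \in V$ and $f \in C^+(V)$. By the definition of a $D$-Ricci-flat graph, there is a labelling of the edges at $x$ (and at its neighbours) by maps $1, \dots, D$ that pair up neighbours in a consistent, ``parallel transport''-like way. First I would write out $\Gamma^\psi(f)(x)$ and $\Gamma_2^\psi(f)(x)$ using this labelling, introducing the normalized quantities $x_i := f(i(x))/f(x)$ and, at the level of second neighbours, the analogous ratios; the Ricci-flatness relations let one organize the sum over pairs of edges symmetrically. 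The aim is to reduce the inequality $\Gamma_2^\psi(f)(x) \geq \tfrac 1 d (\Delta^\varphi f)^2(x) + 0 \cdot \Gamma^\psi(f)(x)$, after multiplying through by the appropriate positive power of $f(x)$ and using $D$-regularity, to a pointwise inequality of the form $\sum_{\text{pairs}} \widetilde\psi(x_i,x_j) \geq \tfrac{1}{d}\big(\sum_i (\varphi(x_i)-\varphi(1)) \cdot (\text{something})\big)^2$, i.e. essentially to a sum of the two-variable inequalities $\widetilde\psi(x_i,x_j) \geq C_\psi^\varphi (\varphi(x_i)+\varphi(x_j)-2\varphi(1))^2$.

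The key steps, in order, are: (1) expand $\Gamma^\psi$ and $\Gamma_2^\psi$ at $x$ in terms of the edge-labelling and the ratios of $f$-values, exactly as in \cite[Theorem 6.6]{M�nch2014} but keeping $\psi$ general; (2) use the Ricci-flat structure to pair terms coming from ``going out and coming back'' along labels $i$ and $j$, so that the $\Gamma_2^\psi$ sum becomes a sum over unordered pairs $\{i,j\}$ of expressions that depend only on $x_i, x_j$ and equal $\widetilde\psi(x_i,x_j)$ up to the normalization; (3) bound each such summand below by $C_\psi^\varphi\,(\varphi(x_i)+\varphi(x_j)-2\varphi(1))^2$, which is exactly the content of the definition of $C_\psi^\varphi$ (and is where concavity of $\varphi,\psi$ and the hypothesis $C_\psi^\varphi>0$ enter, to make the bound nontrivial and correctly signed); (4) use Cauchy--Schwarz (or rather the power-mean / convexity of $t\mapsto t^2$) to pass from $\sum_{\{i,j\}} (\varphi(x_i)+\varphi(x_j)-2\varphi(1))^2$ down to $\tfrac{1}{D}\big(\sum_i (\varphi(x_i)-\varphi(1))\big)^2$, producing the factor $D$ in $d = D/C_\psi^\varphi$; (5) identify $\sum_i (\varphi(x_i) - \varphi(1))$ with $\Delta^\varphi f(x)$ up to the normalization, and collect constants to arrive at the claimed $CD_\psi^\varphi(D/C_\psi^\varphi, 0)$ inequality.

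The main obstacle I expect is step (2): verifying that, with two \emph{different} test functions $\varphi$ and $\psi$ floating around, the Ricci-flat bookkeeping still collapses the $\Gamma_2^\psi$ term into a clean sum of $\widetilde\psi(x_i,x_j)$'s and that the definition of $\widetilde\psi$ given just above is precisely the symmetrized quantity that appears. This is essentially a matter of carefully matching the combinatorial identity from \cite{M�nch2014} — there $\psi$ played both roles — to the present setting; since $\varphi$ only appears through $\Delta^\varphi f$, which is a first-order (single-sum) object, it does not interfere with the second-order pairing, so the adaptation should go through, but the index manipulations are the delicate part. Steps (3)--(5) are then routine: (3) is a direct invocation of the definition of $C_\psi^\varphi$, and (4)--(5) are the same Cauchy--Schwarz and rewriting already used for $CD\psi$.
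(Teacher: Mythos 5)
Your proposal is correct and follows essentially the same route as the paper: the actual proof extracts from \cite[Theorem 6.6]{M�nch2014} the bound $2\Gamma_2^\psi(f)(v) \geq \tfrac12\sum_i\widetilde\psi(z_i,z_{i'})$ for a suitable pairing $i\mapsto i'$, applies the definition of $C_\psi^\varphi$ termwise, and uses the Cauchy--Schwarz step (with $\varphi$ in place of $\psi$) to reach $\tfrac{2}{D}(\Delta^\varphi f)^2$, exactly as in your steps (2)--(5). The ``delicate'' adaptation you worry about in step (2) is handled in the paper simply by observing that the second-order pairing only ever involves $\psi$, so the relevant inequality can be quoted verbatim from \cite{M�nch2014}.
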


\begin{proof}
We can assume $\psi(1)=0$ without loss of generality since $\Gamma_2^\psi$, $\Delta^\psi$ and $C_\psi$ are invariant under adding constants to $\psi$.
Let $v \in V$ and $f \in C(V)$. Since $G$ is Ricci-flat, there are maps $\eta_1,\ldots,\eta_D : N(v):=\{v\} \cup \{w \sim v\} \to V$ as demanded in Definition \ref{DefRF}. For all $i,j \in \{1,\ldots,D\}$, we denote
$%\begin{eqnarray*}
y 									:= f(v)$, $%\\
y_i  								:= f(\eta_i(v))$, $%\\
y_{ij} 							:= f(\eta_j(\eta_i(v)))$, $%\\
z_i 						 		:= y_i / y$, $% \\
z_{ij} 							:= y_{ij}/y_{i}.
$%\end{eqnarray*}

%From the proof of \cite[Theorem 6.6]{Münch2014}, we know that there is a permutation $i \mapsto i'$ such that
We take the sequence of inequalities at the end of the proof of \cite[Theorem 6.6]{Münch2014}.
First, we extract the inequality
\[
2 \Gamma_2^\psi (f) (v) \geq \frac 1 2  \sum_i \widetilde{\psi}(z_i, \zi).
\]
with $\psi$ and for an adequate permutation $i \mapsto i'$.

Secondly instead of continuing this estimate as in the proof of \cite[Theorem 6.6]{Münch2014}, we take the latter part applied with $\varphi$ instead of $\psi$ to see
\[
\frac 1 2  \sum_i  \left[ \varphi(z_i) + \varphi(\zi) \right]^2 \geq \frac { 2 }{D} \left[ \Delta^\varphi f (v) \right]^2.
\]
Since $\widetilde{\psi}(z_i, \zi) \geq   C_\psi^\varphi \left[ \varphi(z_i) + \varphi(\zi) \right]^2$, we conclude
\[
2 \Gamma_2^\psi (f) (v) \geq \frac { 2 C_\psi^\varphi }{D} \left[ \Delta^\varphi f (v) \right]^2.
\]
This finishes the proof.
\end{proof}

The above theorem reduces the problem, whether $CD_\psi^\varphi$ holds on Ricci-flat graphs, to the question whether $C_\psi^\varphi>0$.
By using this fact, we can give the example of the $CDE'$ condition on Ricci-flat graphs.
\begin{example}
Numerical computations indicate that $C^{\log }_{\sqrt{\cdot}} > 0.1104$.
Consequently by Theorem~\ref{TRicci}, $d$-Ricci-flat graphs satisfy the $CD^{\log }_{\sqrt{\cdot}}(9.058d,0)$ inequality and thus due to Proposition~\ref{PCC}, also the $CDE'(2.265d,0)$ inequality.
%An analytic proof of $C^{\log }_{\sqrt{\cdot}} > 0$ seems possible but lengthy, so we omit it here.
\end{example}
Now, we give an analytic estimate of $C^{\log }_{\sqrt{\cdot}}$ by using methods similar to the proof of \cite[Example 6.11]{Münch2014} which shows $C_{\log}^{\log} \geq 1/2$.

\begin{lemma}
$C^{\log }_{\sqrt{\cdot}} \geq 1/16 = 0.0625 $.
\end{lemma}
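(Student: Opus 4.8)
The plan is to collapse the two-variable infimum defining $C^{\log}_{\sqrt{\cdot}}$ into a one-variable problem and then to verify a scalar inequality by elementary calculus. First I would substitute $a=\sqrt x$, $b=\sqrt y$ and expand $\widetilde{\sqrt{\cdot}}$ from its definition, obtaining
\[
\widetilde{\sqrt{\cdot}}(x,y)=\frac{1}{2a}+\frac{1}{2b}+\frac{ab(a+b)}{2}-(a+b).
\]
The denominator $\bigl(\log x+\log y-2\log 1\bigr)^2$ equals $4\bigl(\log(ab)\bigr)^2$, so it depends only on the product $q:=ab$. I would therefore fix $q$, put $b=q/a$, and check that the numerator simplifies to $\frac{(q-1)^2}{2}\bigl(\frac1a+\frac aq\bigr)$; the arithmetic--geometric mean inequality then gives $\widetilde{\sqrt{\cdot}}(x,y)\ge\frac{(q-1)^2}{\sqrt q}$, with equality at $a=b=\sqrt q$. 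Since the denominator is constant on each level set $ab=q$, this yields
\[
C^{\log}_{\sqrt{\cdot}}=\frac14\,\inf_{q>0,\;q\neq1}\frac{(q-1)^2}{\sqrt q\,(\log q)^2},
\]
and it remains to show the last infimum is at least $\tfrac14$.

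Taking square roots, the desired bound reads $2\,|q-1|\ge q^{1/4}\,|\log q|$ for all $q>0$. Writing $q=e^{s}$ and using $e^{s}-1=2e^{s/2}\sinh(s/2)$, it becomes $4\,e^{s/4}\,|\sinh(s/2)|\ge|s|$. For $s\ge0$ this is immediate from $|\sinh(s/2)|\ge|s|/2$ and $e^{s/4}\ge1$.

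The case $s<0$ is the step I expect to be the main obstacle, because there the factor $e^{s/4}<1$ works against the bound on $\sinh$ and the naive estimates fail in a middle range of $|s|$. Setting $s=-r$ with $r>0$ the inequality becomes $2\bigl(e^{r/4}-e^{-3r/4}\bigr)\ge r$; I would let $g(r):=2e^{r/4}-2e^{-3r/4}-r$, note $g(0)=0$, and prove $g'(r)=\tfrac12e^{r/4}+\tfrac32e^{-3r/4}-1>0$ on $[0,\infty)$. Substituting $u:=e^{r/4}\ge1$ this is the convex function $\tfrac12u+\tfrac32u^{-3}-1$, whose minimum on $[1,\infty)$ is attained at $u=\sqrt3$ and equals $\tfrac{2}{\sqrt3}-1>0$. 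Hence $g$ is increasing and nonnegative, which completes the argument and gives $C^{\log}_{\sqrt{\cdot}}\ge\tfrac14\cdot\tfrac14=\tfrac1{16}$.
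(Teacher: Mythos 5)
Your argument is correct, and its overall architecture coincides with the paper's: both proofs first observe that $\widetilde{\sqrt{\cdot}}(x,y)$ factors through $\sqrt{x}+\sqrt{y}$ and $q=\sqrt{xy}$ (your closed form $(\,\sqrt x+\sqrt y\,)\frac{(q-1)^2}{2q}$ is exactly the paper's $\frac{\sqrt x+\sqrt y}{2}\bigl((xy)^{1/4}-(xy)^{-1/4}\bigr)^2$), both apply the same AM--GM step $\sqrt x+\sqrt y\ge 2(xy)^{1/4}$ to kill the transverse variable, and both land on the identical one-variable inequality: under the paper's substitution $e^{2t}=(xy)^{1/4}$ your $q$ equals $e^{4t}$ and your target $2\lvert q-1\rvert\ge q^{1/4}\lvert\log q\rvert$ is precisely $\lvert e^{3t}-e^{-t}\rvert\ge 2\lvert t\rvert$. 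The only genuine divergence is how that scalar inequality is settled. The paper factors
\[
\frac{e^{3t}-e^{-t}}{8t}=\bigl(e^{2t}+1\bigr)\cdot\frac{e^{t}-e^{-t}}{8t}
\]
and bounds the two factors below by $1$ and $1/4$ via $\sinh t/t\ge 1$, which handles both signs of $t$ in one line; you instead split on the sign of $s=\log q$ and, for the harder negative branch, run a monotonicity argument on $g(r)=2e^{r/4}-2e^{-3r/4}-r$ via convexity of $\tfrac12 u+\tfrac32 u^{-3}$ on $[1,\infty)$. You correctly identified that the negative branch is where naive estimates fail (it is exactly the regime the paper's factor $e^{2t}+1\ge 1$ is designed to absorb), and your calculus argument there is sound, just longer than the paper's algebraic trick. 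Both routes yield $C^{\log}_{\sqrt{\cdot}}\ge (1/4)^2=1/16$.
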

\begin{proof}
For $\psi=\sqrt{\cdot}$, we write
\begin{eqnarray*}
    \widetilde{\sqrt{\cdot}}(x,y) = \widetilde{\psi}(x,y)
&=& \left[\psi'(x) + \psi'(y) \right](1-xy) + x [\psi(y) - \psi(1/x)] + y [\psi(x) - \psi(1/y)] \\
&=& \left[\frac 1 {2 \sqrt{x}} + \frac 1 {2 \sqrt{y}} \right] (1-xy) + x \left[\sqrt{y} - \frac 1 {\sqrt{x}}  \right] +  y \left[\sqrt{x} - \frac 1 {\sqrt{y}}  \right] \\
&=& \frac{ \sqrt x + \sqrt y } 2 \cdot \left( \frac 1{\sqrt{xy}} - \sqrt{xy} \right) +  \left( \sqrt x + \sqrt y \right) \left( \sqrt{xy} - 1 \right) \\
&=& \frac{ \sqrt x + \sqrt y } 2 \cdot \left( (xy)^{1/4} - (xy)^{-1/4}  \right)^2 \\
&\geq&  (xy)^{1/4} \cdot \left( (xy)^{1/4} - (xy)^{-1/4}  \right)^2.
\end{eqnarray*}
Hence by substituting $e^{2t}:=(xy)^{1/4}$,
\begin{eqnarray*}
      \frac {\widetilde{\sqrt{\cdot}}(x,y)} {(\log x + \log y)^2}
&\geq&  (xy)^{1/4} \cdot \left( \frac {(xy)^{1/4} - (xy)^{-1/4}}    {4\log (xy)^{1/4}} \right)^2 \\
&=& e^{2t}  \cdot \left( \frac {e^{2t} - e^{-2t}}    {8t} \right)^2 \\
&=&  \left(  \frac {e^{3t} - e^{-t}}    {8t} \right)^2.
\end{eqnarray*}
We expand the fraction to
\[
\frac {e^{3t} - e^{-t}}    {8t} =  \frac {e^{3t} - e^{-t}}    {e^t-e^{-t}}  \cdot \frac {{e^t-e^{-t}}}    {8t} .
\]
Moreover,
\[
\frac {e^{3t} - e^{-t}}    {e^t-e^{-t}} = e^{2t} + 1 \geq 1
\]
and, by the estimate $\frac {\sinh t}{t} \geq 1$,
\[
\frac {{e^t-e^{-t}}}    {8t}  \geq 1/4.
\]
Putting together the above estimates yields
\[
C^{\log }_{\sqrt{\cdot}} = \inf_{x,y>0, xy\neq 1}  \frac {\widetilde{\sqrt{\cdot}}(x,y)} {(\log x + \log y)^2}
												 \geq (1/4)^2 = 1/16.
\]
This finishes the proof.
\end{proof}

\appendix
\section{Appendix}
\begin{defn}[Graph]
A pair $G=(V,E)$ with a finite set $V$ and a relation $E \subset V\times V$ is called a \emph{finite graph} if $(v,v) \notin E$ for all $v \in V$ and if $(v,w) \in E$ implies $(w,v) \in E$ for $v,w \in V$. 
For $v,w \in V$, we write $v \sim w$ if $(v,w) \in E$.% In this case, we say that the vertices $v$ and $w$ are \emph{adjacent}.
%For $v\in V$, we denote $\deg v := \#\{w \in V : w \sim v\}$. 
\end{defn}

\begin{defn}[Laplacian $\Delta$]
Let $G=(V,E)$ be a finite graph.
%The domain of the Laplacian $\Delta$ is
%$
%C(V):=  := \{f: V \to \IR\ \}.
%$ 
%$C(V):= \IR^V $
The \emph{Laplacian} $\Delta : C(V) := \IR^V \to C(V)$ is defined for $f \in C(V)$ and $v \in V$ as
$
\Delta f (v) := \sum_{w \sim v} (f(w) - f(v)).
$
\end{defn}

\begin{defn}
We write $\posR := (0,\infty)$ and $\nnegR := [0,\infty)$.
Let $G=(V,E)$ be a finite graph. Then, we write
$
C^+(V) := \{f:V \to \posR\}.
$
\end{defn}

\begin{defn}[Heat operator $\LL$]
Let $G=(V,E)$ be a graph.
%Let $\II \subset \\IR$ be an interval. 
%The domain of the heat operator $\LL$ is
%\[
%C^1(V \times \II) := \{u: V \times \II \to \IR \; | \; u \mbox { is continuously differentiable in the second variable} \}.
%\]
%For $u \in C^1(V \times \II)$ we write
%\[
%u_t(v):=u(v,t)
%\]
%for all $v \in V$ and $t \in \II$. 
%We call $t \in \II$ the \emph{time}, and we call $v \in V$ the \emph{location} respectively \emph{position}.
%The range of the heat operator is
%\[
%C(V \times \II) := \{u: V \times \II \to \IR \; | \;  u \mbox { is continuous in the second variable} \}.
%\]
The \emph{heat operator}
$\LL : C^1(V \times \II)  \to  C(V \times \II)$
 is defined by $\LL(u) := \Delta u  - \partial_t u$ for all $u \in C^1(V \times \II)$.
We call a function $\sol$ a \emph{solution to the heat equation} on $G$ if  $\LL(u) = 0$. 
\end{defn}

\begin{defn}[Ricci-flat graphs]  \label{DefRF}
Let $D \in \IN$. A finite graph $G = (V,E)$ is called $D$\emph{-Ricci-flat} in $v \in V$ if all $w \in N(v):=\{v\} \cup \{w \in V: w \sim v\}$ have the degree $D$, and if there are maps $\eta_1,\ldots,\eta_D : N(v) \to V $, such that for all $w \in N(v)$ and all $i, j \in \{1,\ldots,D\}$ with $i \neq j$, one has
$\eta_i(w) \sim w$,  
$\eta_i(w) \neq \eta_j(w)$, 
$\bigcup_k \eta_k(\eta_i(v)) = \bigcup_k \eta_i(\eta_k(v))$.
The graph $G$ is called $D$\emph{-Ricci-flat} if it is $D$-Ricci-flat in all $v \in V$.

\end{defn}

%-------------------------------------------------------------------------------------------------------------------
\subsection{The $CD$ condition via $\Gamma$ calculus}

We give the definition of the $\Gamma$-calculus and the $CD$ condition following \cite{Bakry1985}. 

\begin{defn}[$\Gamma$-calculus] \label{DG}
Let $G=(V,E)$ be a finite graph.
Then, the \emph{gradient form} or \emph{carré du champ} operator $\Gamma : C(V) \times C(V) \to C(V)$ is defined by
\[
2 \Gamma (f,g) := \Delta(fg) - f\Delta g - g\Delta f.
\]
Similarly, the \emph{second gradient form} $\Gamma_2 : C(V) \times C(V) \to C(V)$ is defined by
\[
2 \Gamma_2 (f,g) := \Delta \Gamma (f, g) - \Gamma(f, \Delta g)  - \Gamma (g, \Delta f).
\]
We write $\Gamma (f):= \Gamma (f,f)$ and $\Gamma_2 (f):= \Gamma_2 (f,f)$.
\end{defn}

\begin{defn}[$CD(d,K)$ condition] \label{DCD}
Let $G=(V,E)$ be a finite graph and $d \in \posR$.
We say $G$ satisfies the \emph{curvature-dimension inequality} $CD(d,K)$ if for all $f \in C(V)$, 
\[
\Gamma_2(f) \geq \frac 1 d (\Delta f)^2 + K \Gamma(f).
\]
We can interpret this as meaning that that the graph $G$ has a dimension (at most) $d$ and a Ricci curvature larger than $K$.
\end{defn}

%-------------------------------------------------------------------------------------------------------------------
\subsection{The $CDE$ and $CDE'$ conditions via $\widetilde{\Gamma_2}$}

We give the definitions of $CDE$ and $CDE'$ following \cite{Bauer2013,Horn2014}

\begin{defn}[The $CDE$ inequality] \label{DGT}
We say that a graph $G=(V,E)$ satisfies the $CDE(x,d,K)$ inequality if for any $f \in C^+(V)$ such that $\Delta f (x) <0$, we have
\[
\widetilde{\Gamma_2}(f) (x) := \Gamma_2(f)(x) - \Gamma \left(f, \frac{\Gamma(f)}{f} \right)(x) \geq \frac 1 d \left(\Delta f \right)^2(x) + K \Gamma(f)(x).
\]
We say that $CDE(d,k)$ is satisfied if $CDE(x,d,K)$ is satisfied for all $x \in V$.
\end{defn}

\begin{defn}[The $CDE'$ inequality]
We say that a graph $G=(V,E)$ satisfies the $CDE'(d,K)$ inequality if for any $f \in C^+(V)$, we have
\[
\widetilde{\Gamma_2}(f)  \geq \frac 1 d f^2 \left(\Delta \log f \right)^2 + K \Gamma(f).
\]
\end{defn}

%-------------------------------------------------------------------------------------------------------------------
\subsection{The $CD\psi$ conditions via $\Gamma^\psi$ calculus}

We give the definition of the $\Gamma^\psi$-calculus and the $CD\psi$ condition following \cite{Münch2014}.

\begin{defn}[$\psi$-Laplacian $\Delta^\psi$] \label{DLP}
Let $\psi \in C^1(\posR)$ and let $G=(V,E)$ be a finite graph. Then, we call
$\Delta^\psi : C^+(V) \to C(V)$, defined as 
\[
(\Delta^\psi f ) (v) :=  \left( \Delta \left[ \psi \left( \frac f {f(v)} \right) \right] \right) (v),
\]
the $\psi$\emph{-Laplacian}. 
\end{defn}

\begin{defn}[$\psi$-gradient $\Gamma^\psi$] \label{DGpsi}
 Let $\psi \in C^1(\posR)$ be a concave function and let $G=(V,E)$ be a finite graph.
We define  
\[
\overline{\psi}(x):= \psi'(1)\cdot(x-1)  - (\psi(x) - \psi(1)). %\geq 0.
\] 
Moreover, we define the $\psi$-\emph{gradient} as $\Gamma^\psi : C^+(V) \to C(V)$,
\[
\Gamma^\psi  := \Delta^{\overline{\psi}}.
\]
\end{defn}

\begin{defn}[Second $\psi$-gradient $\Gamma_2^\psi$] \label{DefG2} 
Let $\psi \in C^1(\posR)$, and let $G=(V,E)$ be a finite graph. Then, we define
$\Omega^\psi : C^+(V) \to C(V)$ by 
\[
(\Omega^\psi f ) (v) :=  \left( \Delta \left[ \psi' \left( \frac f {f(v)} \right)  \cdot \frac f {f(v)} \left[ \frac{\Delta f} {f} - \frac{(\Delta f)(v)} {f(v)} \right]  \right] \right) (v).
\]

Furthermore, we define the \emph{second} $\psi$-\emph{gradient} $\Gamma_2^\psi : C^+(V) \to C(V)$ by
\[
2 \Gamma_2^\psi (f) := \Omega^\psi f + \frac {\Delta f \Delta^\psi f} f - \frac {\Delta \left(f \Delta^\psi f\right)} f.
\]
\end{defn}

\begin{defn}[$CD\psi$ condition]
Let $G=(V,E)$ be a finite graph, $K \in \IR$ and $d \in \posR$. We say $G$ satisfies the $CD\psi(d,K)$ \emph{inequality} if for all $f \in C^+(V)$, one has 
\begin{equation*}
  \Gamma_2^\psi( f) \geq \frac 1 d \left(\Delta^\psi f \right)^2 + K\Gamma^\psi( f). \label{CDpsi}
\end{equation*}
\end{defn}

\end{document}